\newtheorem{thm}{Theorem}
\newtheorem{prop}[thm]{Proposition}
\newtheorem{defn}{Definition}
\newcounter{alphthm}
\newtheorem{propriete}[alphthm]{Theorem}
\newtheorem{cor}{Corollary}
\newtheorem{lem}[thm]{Lemma}
\newcommand{\be}{\begin{equation}}
\newcommand{\ee}{\end{equation}}
\newcommand{\ben}{\begin{enumerate}}
\newcommand{\een}{\end{enumerate}}
\newcommand{\beq}{\begin{eqnarray}}
\newcommand{\eeq}{\end{eqnarray}}
\newcommand{\beqn}{\begin{eqnarray*}}
\newcommand{\eeqn}{\end{eqnarray*}}
\newcommand{\bpf}{\begin{proof}}
\newcommand{\epf}{\end{proof}}
\newcommand{\bl}{\begin{lem}}
\newcommand{\el}{\end{lem}}
\newcommand{\bp}{\begin{prop}}
\newcommand{\ep}{\end{prop}}
\newcommand{\bd}{\begin{defn}}
\newcommand{\ed}{\end{defn}}
\newcommand{\bt}{\begin{thm}}
\newcommand{\et}{\end{thm}}
\newcommand\bpr{\begin{prop}}
\newcommand\epr{\end{prop}}
\title{\bf \small A COMPARISON THEOREM ON PROJECTIVE FINSLER GEOMETRY}
\author{M. Sepasi and B. Bidabad\footnote{corresponding author}}
\date{\small Faculty of Mathematics and Computer Science, Amirkabir University of Technology, (Tehran Polytechnic), Hafez Ave., 15914 Tehran, Iran.\\ m\_sepasi@aut.ac.ir;\\ bidabad@aut.ac.ir}
\begin{document}
 \maketitle
\begin{abstract}
Here, a non-linear analysis method is applied rather than classical one to study projective Finsler geometry.  More intuitively, by means of an inequality on Ricci-Finsler curvature, a projectively invariant pseudo-distance is introduced and an analogous of  Schwarz' lemma in Finsler geometry is proved. Next, the Schwarz' lemma is applied to show that the introduced pseudo-distance is a distance. This projectively invariant distance  will  be served in continuation of this work to investigate  Einstein-Finsler spaces and classify Finsler spaces as well.
\end{abstract}
{\small \bf keywords: }{Ricci tensor, Einstein-Finsler space, projective parameter, Funk distance,  Schwarzian derivative.}

{\small Mathematic Subject Classification: 53B40,  58B20.}

\section{Introduction}
In mathematics, a geodesic as a generalization of straight line determines geometry of the space and in physics, a geodesic represents the
equation of motion, which describes all the phenomena.
If two regular affine connection  on a manifold  have the same geodesics as the point sets, then they are said to be projectively related.
Much of the practical importance of two projectively related ambient spaces derives from the fact that they produce same physical events, see for instance \cite{AM}.


In projective geometry, there are two different well known approaches. The classical method is application of projectively invariant quantities, cf., \cite{AIM,3,4}.
Another approach is application of projectively invariant  distance functions
. For instance in Riemannian geometry see  \cite{1,8}.

The present work is motivated by two distinct aims. First, an endeavor has been made to furnish a reasonably comprehensive account  of analysis based on the methods of Schwarzian derivative and projective invariant distance, in sense of the second approach, on Finsler geometry.
Next,  this monograph will be  served as an introduction to our following work in classification of  Einstein-Finsler spaces.

One of the present author in a recent joint work introduced a conformally invariant distance function, determined by electrostatic capacity of a condenser, to present a classification of Finsler spaces, cf. \cite{BH}.
Here in this paper a projectively invariant distance is  defined in Finsler spaces and some basic results are obtained. In fact, inspired by Berwald's method, we first introduce a projectively invariant parameter for  geodesics on a Finsler space.
More intuitively, let $\gamma(t)$ be a geodesic of an affine connection on a manifold. In general, the parameter $t$ does not remain invariant under projective changes. There is a unique parameter up to linear fractional  transformations which is projectively invariant. This parameter is referred to, in the literature, as \emph{projective parameter}. See \cite{5,6,7} for a survey. A mapping $f:I\rightarrow M$ is said to be \emph{projective map} if $f$ describes a geodesic on $M$ and its natural parameter is a projective parameter.
In Ref. \cite{8}, Kobayashi used the projective map to introduce a projectively invariant pseudo-distance $d_M$, on a connected Riemannian space and proved the following version of Schwarz's lemma.
\begin{propriete}
Let $(M,g)$ be a Riemannian space for which the  Ricci tensor satisfies
$$(R_{jk})\leq- c^2(g_{jk}),$$
as matrices, for a positive constant $c$. Then every projective map $f:I=]-1,+1[\rightarrow M$ satisfies
$$f^{*}ds_M^2 \leq \frac {n-1}{4c^2} d{s_I}^2,$$
where,  $ds_M^2 = \Sigma g_{jk}dx^jdx^k$  and $d{s_I}^2$ are the  first fundamental forms of $g$ and Poincar\'{e} metric on $I$.
\end{propriete}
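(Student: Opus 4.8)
The plan is to reduce the tensorial estimate to a one–dimensional comparison along the image geodesic, linearize it through a Schwarzian substitution, and then run a maximum–principle argument against the constant–curvature model.

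\emph{Reduction.} Since $f$ is a projective map, it traces a geodesic $\gamma$ of $(M,g)$ whose natural parameter is the projective parameter, which I identify with the coordinate $t$ on $I$. Writing $s$ for arc length along $\gamma$ and $v(t):=ds/dt>0$ for the speed, one has $f^{*}ds_M^2=v(t)^2\,dt^2$, while the Poincar\'e metric is $ds_I^2=\frac{4}{(1-t^2)^2}\,dt^2$. Hence the assertion is equivalent to the scalar estimate $v(t)\le \frac{\sqrt{n-1}}{c}\,\frac{1}{1-t^2}$. I would first observe that both sides transform identically under the one–parameter group of hyperbolic isometries of $I$ (the linear–fractional changes reflecting the non–uniqueness of the projective parameter), so the inequality is well posed and may be normalized by such a motion.

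\emph{Linearization.} Next I invoke Berwald's defining relation of the projective parameter, $\{t,s\}=\frac{2}{n-1}\,\mathrm{Ric}(\dot\gamma,\dot\gamma)$, where $\{\,\cdot\,,\,\cdot\,\}$ is the Schwarzian derivative. The substitution $u:=v^{-1/2}=(ds/dt)^{-1/2}$ linearizes the Schwarzian: a direct computation with the cocycle identity gives $u''=\frac{1}{n-1}\,\mathrm{Ric}(\dot\gamma,\dot\gamma)\,u^{-3}$ (primes denoting $d/dt$). Evaluating the hypothesis $\mathrm{Ric}\le -c^2 g$ on the unit vector $\dot\gamma$ then yields the differential inequality $u''\le -a^2u^{-3}$ with $a:=c/\sqrt{n-1}>0$, while the target estimate becomes $u(t)\ge u_0(t):=\sqrt{a}\,\sqrt{1-t^2}$. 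The point of the substitution is that $u_0$ solves the companion equation $u_0''=-a^2u_0^{-3}$ with equality, so the space of constant Ricci curvature $-c^2$ saturates the bound.

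\emph{Comparison and the main obstacle.} Finally I compare $u$ with $u_0$. The natural device is the energy $E:=(u')^2-a^2u^{-2}$, for which $E'=2u'(u''+a^2u^{-3})$ has sign opposite to that of $u'$; since $u$ is positive and concave it is unimodal, so $E$ decreases to the value at the maximum and increases thereafter, whereas $E_0\equiv -a$ for the model. Equivalently I would track the Wronskian $\omega:=u'u_0-uu_0'$, whose derivative obeys $\omega'\le a^2u^{-3}u_0^{-3}(u^4-u_0^4)$; on any subinterval where $u<u_0$ this forces $\omega$ strictly decreasing, and matching the signs of $\omega$ at the two ends of such an interval (where $u=u_0$) produces a contradiction, ruling out any interior region with $u<u_0$. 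The main obstacle is the behaviour at the singular endpoints $t\to\pm1$: because $\gamma$ is traversed over the whole interval $I$, completeness forces $s\to\pm\infty$ and $u\to 0$, and I must show that $u$ vanishes at the same rate as $u_0$ (equivalently $\omega\to 0$) to close the comparison up to the boundary. Quantifying this rate—for instance by integrating $dt=du/|u'|$ against the energy bound $E\ge -a^2u(0)^{-2}$ under the normalization $u'(0)=0$, which gives $u(0)\ge\sqrt a=u_0(0)$—is the delicate step; granting it, the Wronskian sign analysis yields $u\ge u_0$ on all of $I$, which is exactly the claimed inequality.
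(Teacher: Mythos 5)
Your reduction and linearization are correct, and they constitute a genuinely different route from the paper's: the paper (in its proof of the Finsler analogue, Theorem 1) works directly with the ratio $h=ds_M/ds_I$ and runs an Ahlfors--Schwarz maximum-principle argument (first and second derivative tests on $\ln h$ at an interior maximum, combined with the Schwarzian identity and the cocycle rule), whereas you linearize the Schwarzian through $u=(ds/dt)^{-1/2}$ and compare with the explicit model solution. The identity $u''=\frac{1}{n-1}\mathrm{Ric}(\dot\gamma,\dot\gamma)\,u^{-3}$, the verification that $u_0=\sqrt{a}\sqrt{1-t^2}$ satisfies $u_0''=-a^2u_0^{-3}$ with $E_0\equiv-a$, and the Wronskian inequality $\omega'\le a^2u^{-3}u_0^{-3}(u^4-u_0^4)$ all check out; and on any maximal subinterval $(\alpha,\beta)$ compactly contained in $I$ where $u<u_0$ with equality at the ends, the sign bookkeeping ($\omega(\alpha)\le 0$, $\omega(\beta)\ge 0$, $\omega$ strictly decreasing) does produce the desired contradiction.

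However, the endpoint behaviour you flag is a genuine gap, and the device you propose does not close it. You may not invoke completeness (the statement does not assume it), so $s$ need not tend to $\pm\infty$ and no vanishing rate for $u$ at $t=\pm1$ is available; moreover a positive concave function on $(-1,1)$ need not be unimodal, so you cannot in general arrange $u'(0)=0$ by a M\"obius motion of $I$, and even when the critical point exists your energy computation only controls $u$ at that single point rather than on all of $I$. The clean repair is the classical exhaustion: replace $u_0$ by $u_{0,r}(t)=\sqrt{a(r^2-t^2)/r}$ for $0<r<1$, which also solves $u_{0,r}''=-a^2u_{0,r}^{-3}$ but vanishes at $t=\pm r$, where $u$ is continuous and strictly positive; hence the set $\{u<u_{0,r}\}$ is compactly contained in $(-r,r)$, your Wronskian argument applies verbatim to give $u\ge u_{0,r}$ there, and letting $r\to 1^-$ yields $u\ge u_0$ on $I$. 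It is worth noting that the paper's own argument silently assumes that $h$ attains its supremum at an interior point of the open interval, so it carries essentially the same unaddressed boundary issue; the same exhaustion (shrinking the interval and letting $r\to1$) is the standard fix for both.
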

Next he used the Schwarz' lemma  to prove  the following theorem.
\begin{propriete}
 Let $(M,g)$ be a (complete) Riemannian space for which the  Ricci tensor satisfies
$$(R_{jk})\leq- c^2(g_{jk}),$$
as matrices, for a positive constant $c$. Then the pseudo-distance $d_M$, is a (complete) distance.
\end{propriete}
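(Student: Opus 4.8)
The plan is to exploit the Schwarz-type inequality stated just above (the preceding theorem) to compare the projective pseudo-distance $d_M$ with the Riemannian distance $d_g$ attached to $g$, and thereby upgrade the pseudo-distance to a genuine (complete) distance. Recall that $d_M$ is built from chains: for $p,q\in M$ one sets
$$d_M(p,q)=\inf\sum_{i=1}^{k}\rho(a_i,b_i),$$
the infimum being taken over all finite chains of projective maps $f_i:I\rightarrow M$ and parameters $a_i,b_i\in I$ with $f_1(a_1)=p$, $f_i(b_i)=f_{i+1}(a_{i+1})$, and $f_k(b_k)=q$, where $\rho$ is the Poincar\'e distance associated with $ds_I^2$. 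Symmetry, the triangle inequality and $d_M(p,p)=0$ are immediate from this definition, so to prove that $d_M$ is a distance it suffices to show $d_M(p,q)>0$ whenever $p\neq q$.

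First I would convert the infinitesimal inequality $f^{*}ds_M^2\leq\frac{n-1}{4c^2}\,ds_I^2$ into a statement about lengths and distances. Taking square roots of these quadratic forms and integrating along curves shows that every projective map $f:I\rightarrow M$ is distance-decreasing up to the factor $\frac{\sqrt{n-1}}{2c}$, that is $d_g(f(a),f(b))\leq\frac{\sqrt{n-1}}{2c}\,\rho(a,b)$ for all $a,b\in I$. Applying this to each link of an admissible chain and summing with the triangle inequality for $d_g$ gives $d_g(p,q)\leq\frac{\sqrt{n-1}}{2c}\sum_i\rho(a_i,b_i)$; passing to the infimum over chains yields the key comparison
$$d_g(p,q)\leq\frac{\sqrt{n-1}}{2c}\,d_M(p,q).$$
Since $d_g$ is a genuine Riemannian distance, $p\neq q$ forces $d_g(p,q)>0$, hence $d_M(p,q)>0$, and $d_M$ is a distance.

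For the completeness assertion the same comparison shows that every $d_M$-Cauchy sequence is $d_g$-Cauchy, hence converges in the complete space $(M,d_g)$ to some point $p$. It remains to promote $d_g$-convergence to $d_M$-convergence, and for this I would establish a local reverse estimate $d_M(p,q)\leq C'\,d_g(p,q)$ valid for $q$ in a small geodesic ball about $p$. The construction is explicit: join $p$ to a nearby $q$ by a minimizing geodesic, reparametrize it by its projective parameter so as to realize it as a single link $f:I\rightarrow M$, and bound the Poincar\'e length $\rho(a,b)$ of the corresponding parameter interval. Combining the global lower bound with this local upper bound shows that $d_M$ and $d_g$ induce the same topology and are locally Lipschitz-equivalent, so that the $d_g$-limit $p$ is also the $d_M$-limit and $(M,d_M)$ is complete.

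The main obstacle is precisely this local upper bound in the completeness step: one must verify that a short geodesic issuing from $p$ admits a projective reparametrization whose parameter remains inside $I=]-1,+1[$ and whose Poincar\'e length is controlled linearly by the arc length $d_g(p,q)$. This hinges on the fine behaviour of the projective parameter for short geodesics, essentially that it is asymptotically affine near the initial point, and it is here, rather than in the Schwarz lemma, that the analytic content of the projective parameter must be supplied.
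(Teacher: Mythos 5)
Your argument for the distance property is exactly the route the paper takes: the Schwarz lemma makes every projective map distance-decreasing up to the factor $\frac{\sqrt{n-1}}{2c}$ (the paper records this as a corollary of its Theorem 1, with an extra constant $k$ from the Funk metric), and then summing over the links of a chain and passing to the infimum gives $d_g(p,q)\leq\frac{\sqrt{n-1}}{2c}\,d_M(p,q)$, whence $d_M(p,q)>0$ for $p\neq q$. The paper packages the ``sum over links and take the infimum'' step as a separate maximality property of $d_M$ (its Proposition~2(b): any pseudo-distance dominated by the Funk distance along all projective maps is dominated by $d_M$), but the content is identical to your inline computation. This part of your proposal is correct and complete.

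The completeness clause is where the two diverge, and where your proposal has a genuine (and honestly acknowledged) gap. You are right that the comparison $d_g\leq C\,d_M$ only shows that a $d_M$-Cauchy sequence converges in $(M,d_g)$; to conclude that it converges in $(M,d_M)$ one needs some form of upper semicontinuity of $d_M$ with respect to the manifold topology, e.g.\ a local bound $d_M(p,q)\leq C'\,d_g(p,q)$. Your sketch of how to get it --- realize a short minimizing geodesic as a single projective link and control the Poincar\'e length of the parameter interval --- is the right idea (in the model Einstein case $\mathrm{Ric}=-c^2g$ the projective parameter is explicitly $u=\tanh(cs/\sqrt{n-1})$ and the bound is immediate; the general case $\mathrm{Ric}\leq-c^2g$ needs a comparison argument for the Schwarzian equation, and one must also check the reparametrized geodesic is defined on all of $I$), but as written this step is not proved. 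Note that the paper itself offers no proof to compare against here: the statement you were given is quoted from Kobayashi without proof, and the paper's own Finslerian analogue (its Theorem~2) silently drops the completeness assertion and proves only that $d_M$ is a distance, by precisely your first argument. So your proposal matches the paper on everything the paper actually proves, and the missing piece is the local upper estimate needed for completeness.
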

This result is used later by Kobayashi to characterize Einstein-Riemann spaces as follows;
\emph{ The projective transformations of a complete Einstein space with negative Ricci tensor are all isometries.}, cf., \cite{1}.

In this paper, with above objective in mind, and by means of Funk metric, a pseudo-distance on  connected Finsler spaces is introduced. And a  Finslerian setting of the Schwarz' lemma is carried out  as follows;
 \setcounter{thm}{0}
\bt 
  Let $(M,F)$ be a connected Finsler space for which the Ricci tensor satisfies
  \be \nonumber
 (Ric)_{ij} \leq -c^2g_{ij},
  \ee
 as matrices, for a  positive constant $c$.
  Then we have
  \begin{equation*}
  {\tilde{f}}^*(ds_M^2) \leq \frac{(n-1)k^2}{4c^2} ds_I^2,
  \end{equation*}

     where, $ds_I$ and $ds_M$ are the first fundamental  forms of the Funk metric on $I$ and  the Finsler metric on $M$ respectively, and  $\tilde{f}$ is the natural lift of an arbitrary projective map ${f}$.
 \et
 Next, the  Showarz' lemma is used to prove the following theorem.

  \bt
  Let $(M,F)$ be a connected Finsler space for which the Ricci tensor satisfies
  \begin{equation*}
  (Ric)_{ij} \leq -c^2g_{ij},
  \end{equation*}
 as matrices, for a positive constant $c$. Then the pseudo-distance $d_M$, is a distance.
  \et

\section{Preliminaries }
Let $M$ be an $n$- dimensional  $C^{\infty}$ manifold, $(x , U)$ a local chart on $M$, and $TM$ the bundle of tangent spaces $TM:={\cup}_{x\in M} T_xM$.  Elements of $TM$ are denoted by  $(x,y)$ and called line element, where $x\in M$ and $y\in T_xM$. The natural projection $\pi:TM\rightarrow M $, is given by $\pi (x,y):= x$.

 The pull-back tangent bundle $\pi^* TM$ is a vector bundle over the slit tangent bundle $TM_0:=TM\backslash 0$ whose fiber $\pi^*_v TM$ at $v \in TM_0$ is just $T_xM$, where $\pi (v) = x$. Then
${\pi}^{*} TM = \{(x,y,v) \mid y\in T_xM, v \in T_xM\}.$

A (globally defined) Finsler structure on $M$ is a function $F: TM\rightarrow [0 , \infty) $ with the following properties;
\begin{itemize}
\item[(i)] Regularity: $F$ is $C^{\infty}$ on the entire slit tangent bundle $TM_0$,

\item[(ii)] Positive homogeneity:  $F(x , \lambda y) = \lambda F(x , y)$ for all $\lambda > 0$,

\item[(iii)]Strong convexity: The Hessian matrix $(g_{ij}) := ({[1/2F^2]}_{y^iy^j})$, is positive-definite at every point of $TM_0$.
\end{itemize}
The pair $(M,F)$ is known as a Finsler space.

Let $\gamma : [a , b] \rightarrow M$ be a piecewise $C^{\infty}$ curve with velocity $\frac{d\gamma}{dt} = \frac{d{\gamma}^i}{dt} \frac{\partial}{\partial x^i} \in T_{\gamma (t)} M$. The arc length parameter of $\gamma$ is given by
\be\label{arc length}
s(t) = \int_{t_0}^{t} F(\gamma , \frac{d\gamma}{dr}) dr.
\ee
 Its integral length  is denoted by $L(\gamma) := \int_a^b F(\gamma , \frac{d\gamma}{dt}) dt $.
 For $x_0$ , $x_1$ $\in M$, denote by $\Gamma (x_0 , x_1)$ the collection of all piecewise $C^{\infty}$ curves  $\gamma : [a , b] \rightarrow M$ with $\gamma (a) = x_0$ and $\gamma(b) = x_1$. Define a map $d_F : M \times M \rightarrow [0 , \infty)$ by
\be\label{distance}
d_F(x_0 , x_1) := inf L(\alpha),  \quad \alpha \in \Gamma (x_0 , x_1).
\ee
It can be shown that $d_F$ satisfies the first two axioms of a metric space. Namely,
\begin{itemize}
\item[(1)] $d_F(x_0 , x_1) \geq 0$ , where equality holds if and only if $x_0 = x_1$,
\item[(2)] $d_F(x_0 , x_1) \leq d_F(x_0 , x_1) + d_F(x_1 , x_2)$.
\end{itemize}
We should remark that the distance function $d_F$ on a Finsler space does not have  the symmetry property.
If the Finsler structure F is absolutely homogeneous, that is $F(x,\lambda y)=\mid \lambda \mid F(x,y)$ for $\lambda \in \mathbb{R}$, then one also has
\begin{itemize}
\item[(3)] $d_F(x_0, x_1) = d_F(x_1,x_0)$.
\end{itemize}
See Ref. \cite{9}.

Consider $\gamma_{jk}^i:=1/2g^{is}(\frac{\partial g_{sj}}{\partial x^k}-\frac{\partial g_{jk}}{\partial x^s}+\frac{\partial g_{ks}}{\partial x^j})$, the formal Christoffel symbols of the second kind, and let $G^i:={\gamma}^i_{jk}y^iy^j$.
 A $C^\infty$ curve $\gamma: t\rightarrow x^i(t)\in M$ is called a \emph{geodesic} of the  Finsler space $(M,F)$,
if it obeys the system of differential equation
\begin{equation}\label{e3}
\frac{d^2x^i}{dt^2} + G^i(x(t), \frac{dx}{dt}) = f(t) \frac{dx^i}{dt},
\end{equation}
where, $f(t)=\frac{d^2s}{dt^2}/ \frac{ds}{dt}=\frac{d}{dt}[\log F(T)]$, $s$ is the arc length parameter, and $T$ is the velocity field, cf. Ref. \cite{9}.
Replacing the arbitrary parameter $t$ by  the arc length parameter $s$ the above equation reads
\begin{eqnarray}\label{e2}
\frac{d^2x^i}{ds^2}+G^i(x(s), \frac{dx}{ds}) = 0.
 \end{eqnarray}

Let $\bar{F}$ be another Finsler structure on M. If any geodesic of $(M,F)$  coincides with a geodesic of
$ (M,\bar{F})$ as set of points and vice versa, then the change  $F\rightarrow \bar{F}$ of the metric is called \emph{projective } and $F$ is said to be \emph{projective} to $\bar{F}$.
      A Finsler space $(M,F)$  is projective to another Finsler space $(M,\bar{F})$, if and only if there exists a  1-homogeneous scalar field $P(x,y)$ satisfying
   \begin{equation}\label{e4}
    \bar{G}^i(x , y)=G^i(x , y)+P(x,y)y^i.
    \end{equation}

    The scalar field $P(x , y)$ is called the \emph{projective factor} of the projective change under consideration. Let  $G_j^i := \frac{\partial G^i}{\partial y^j}$  and ${\ell}^j := \frac{y^j}{F}$. If we put
 \begin{equation}\label{e5}
 {R^i}_k:= \frac{1}{2}{\ell}^j (\frac{\delta}{\delta x^k}\frac{G^i_j}{F}-\frac{\delta}{\delta x^j} \frac{G^i_k}{F}),
 \end{equation}
then it can be easily shown
 \begin{equation}\label{e6}
2F^2 {R^i}_k=2 (G^i)_{x^k}-\frac{1}{2}(G^i)_{y^j}(G^j)_{y^k}-y^j(G^i)_{y^kx^j}+G^j(G^i)_{y^ky^j}.
  \end{equation}
See Ref. \cite[ P.71]{9}.
The \emph{Ricci Scalar} is defined by $Ric:= {R^i}_i$.

 Here we use the definition of  \emph{Ricci tensor}
introduced  by Akbar-Zadeh,   as follows
 \begin{equation}\label{e8}
Ric_{ik} :=\frac{1}{2}(F^2Ric)_{y^iy^k}.
\end{equation}
Moreover, by homogeneity we have $Ric_{ik} {\ell}^i{\ell}^k = Ric$.
From (\ref{e6})  we obtain
\be\label{e9}
2F^2 Ric = 2 (G^i)_{x^i} -\frac{1}{2} (G^i)_{y^j}(G^j)_{y^i}- y^j(G^i)_{y^ix^j} + G^j(G^i)_{y^iy^j}.
\end{equation}
Under the projective change (\ref{e4}) we have
\begin{equation}\label{e10}
\bar{F}^2\bar{Ric} = F^2Ric + \frac{(n-1)}{2}(\frac{\partial P}{\partial x^i} y^i - \frac{\partial P}{\partial y^i} G^i + \frac{P^2}{2}).
\end{equation}
 Now we are in a position to define in the next section  the projective parameter of a geodesic on a Finsler space.
\section{A projectively invariant distance on a Finsler space}
\subsection{Projective Parameter}
 Berwald in Ref. \cite{5}, introduced the notion of a \emph{general affine connection $\Gamma$} on an $n$-dimensional manifold $M$, as a geometric object  with components ${\Gamma}^i_{jk}(x , \dot{x})$, 1-homogeneous in $\dot{x}$.  These geometric objects transform by the local change of  coordinates
\be\label{coordinatesChange}
{\bar{x}}^i = {\tilde{x}}^i(x^1,...,x^n),
\ee
as ${\tilde{\Gamma}}^i_{jk} = ({\Gamma}^l_{mr} \frac{\partial x^m}{\partial{\tilde{x}}^j} \frac{\partial x^r}{{\tilde{x}}^k} + \frac{{\partial}^2 x^l}{\partial{\tilde{x}}^j \partial{\tilde{x}}^k}) \frac {\partial {\tilde{x}}^i}{\partial x^l}$,
wherever $\dot{x}$ are transformed like the components of a contravariant vector.

Berwald has defined the projective parameter  for geodesics of general affine connections to be a parameter which is  projectively invariant.
These specifications are carefully spelled out for geodesics of Finsler metrics in the following natural manner.


First recall that for a $C^\infty$ real function $f$ on $\mathbb{R}$, and for $t \in \mathbb{R}$, the \emph{Schwarzian derivative}
$$\{f , t\} = \frac{\frac{d^3f}{dt^3}}{\frac{df}{dt}}-\frac{3}{2}\big[\frac{\frac{d^2f}{dt^2}}{\frac{df}{dt}}\big]^2, $$
 is defined to be an operator which is invariant under all linear fractional transformations
 $t \rightarrow \frac{a t + b}{c t +d}$ where, $ad - bc \neq 0$. That is,
 \be\label{PropertyOfSchwarzian}
 \{ \frac{a f + b}{c f +d} , t \} = \{f , t\}.
 \ee
Let $g$ be a real function for which the composition $f\circ g$ is defined. Then,
\be\label{schwarzian}
\{f\circ g,t\}=\{f,g(t)\}(\frac{dg}{dt})^2+\{g,t\}.
\ee
In general, the parameter $t$ of a geodesic, does not remain invariant under projective changes. Here, we show that there is a unique parameter up to linear fractional  transformations which is projectively invariant. This parameter is referred to, in the literature, as projective parameter.

Let $\gamma$  be a geodesic on the Finsler space $(M , F)$. We need  a parameter $\pi$ which remains invariant  under both the coordinates change (\ref{coordinatesChange}),  and   the projective change (\ref{e4}).
We define the \emph{projective normal parameter} $\pi$ of   $\gamma$  by
\begin{equation}\label{e11}
\{\pi , s\} = -4 A G^0(x , \frac{dx}{ds}),
\end{equation}
where $\{\pi , s\}$ is the Schwarzian derivative,
 $A\neq0$ is a constant and
$ G^0(x , \dot{x})$  is a  homogeneous function of second degree in $\dot{x}$.
We require that the parameter $\pi$ remains invariant under the coordinates change (\ref{coordinatesChange}),  and   the projective change (\ref{e4}). This gives to the quantity $G^0$ the following   transformation laws,
 \begin{equation}\label{e12}
 \tilde{G}^0(\tilde{x} , \dot{\tilde{x}}) = G^0(x , \dot{x})\qquad \dot{\tilde{x}}^i= \frac{\partial \tilde{x}^i}{\partial x^k} \dot{x}^k.
 \end{equation}
By projective change (\ref{e4}), we have
 \begin{equation}\label{e13}
\bar{G}^0 = G^0 - \frac{1}{4A}(\frac{\partial P}{\partial x^i} \dot{x}^i - \frac{\partial P}{\partial \dot{x}^i} G^i + \frac{P^2}{2}).
\end{equation}

According to  (\ref{e10}) and (\ref{e13}),  the scalar $R^*$ defined by
 \begin{equation}\label{e14}
R^* := F^2Ric + 2A (n-1)G^0,
\end{equation}
 is 2-homogeneous in $\dot{x}^i$ and remains invariant by projective change (\ref{e4}).
If we put $R^* = 0$ then
 \begin{equation}\label{e15}
G^0 = -\frac{1}{2A(n-1)}F^2 Ric.
\end{equation}
Plugging the value of $G^0$ into  (\ref{e11}), we obtain
\begin{equation}\label{e16}
\{\pi,s\}=\frac{2}{n-1}F^2Ric = \frac{2}{n-1} Ric_{jk}\frac{d{x}^j}{dt}\frac{d{x}^k}{dt},
\end{equation}
which is called the  \emph{preferred projective normal parameter} up to linear fractional transformations.
In the sequel we will simply refer to preferred projective normal parameter as \emph{ projective parameter}.

 Let $(M,F)$ be projectively related to $(M,\bar F)$ and  the curve $\bar{x}(\bar{s})$ be a geodesic with affine parameter  $\bar{s}$ on $(M,\bar F)$  representing the same geodesic as $x(s)$  of $(M,F)$, except for its parametrization. Then, one can easily check that a projective parameter $\bar{\pi}$ defined by $\bar{x}(\bar{s})$ is related to the projective parameter $\pi$  by  $\bar{\pi} = \frac{a\pi+b}{c\pi+d}$.

\subsection{Funk distance and Funk metric}
  Let $D$ be a convex domain in $\mathbb{R}^n$ and $\partial D$  its boundary.
For any two points $A$ and $B$ on $D$, the line through $A$ and $B$ intersects $\partial D$ at $P$ in the order $A$, $B$ and $P$. The \emph{Funk's distance} $f(A , B)$ is defined by  $f(A , B) :=\frac {1}{k} \log \frac{AP}{BP}$, where $k$ is a positive constant and $AP$ and $BP$ denote the Euclidean distances, cf. Ref. \cite{11}.
Clearly, the distance $f$ satisfies
\begin{itemize}
\item $f(A , B)\geq 0$ for any two points $A$ and $B$ in $D$.
\item $f(A , B) = 0$ if and only if $A=B$.
\item $f(A , B) + f(B , C) \geq f(A , C)$ for any three points $A$, $B$ and $C$ in $D$. The equality holds if and only if $B$ is on the segment $AC$, provided  $D$ is strictly convex.
\item $f(A , B) \neq f(B , A)$ in general, but $f(A , A_n)\rightarrow 0$ if and only if $  f(A_n , A)\rightarrow 0.$
\end{itemize}
\setcounter{thm}{0}
\bp
 Let $D = \{(x^i) \in \mathbb{R}^n \mid \phi (x^i) > 0\}$, and $ \partial D : \phi(x^i) = 0$,
where $ \phi(x^i) = \alpha_{ij} x^ix^j + 2\beta_i x^i + \gamma$,  $\alpha_{ij} = \alpha_{ji}$ and $\gamma > 0$ is a positive number.
Then the Funk metric $L_f$, is given by
\begin{equation}\label{e17}
L_f(x,y) = \{(a_{ij}(x) y^iy^j)^{1/2} + b_i(x) y^i\} / k,
\end{equation}
where,
\begin{align}\label{e18}
& a_{ij} = \frac{({\alpha}_{ik}x^k + {\beta}_i) ({\alpha}_{jk}x^k + {\beta}_j) - {\alpha}_{ij}({\alpha}_{km}x^kx^m + 2 {\beta}_k x^k +\gamma) }{({\alpha}_{km}x^kx^m + 2 {\beta}_k x^k + \gamma)^2},\\
\label{e19}
& b_j = - \frac{{\alpha}_{jk}x^k + {\beta}_j}{{\alpha}_{km}x^kx^m + 2 {\beta}_k x^k + \gamma} = -1/2\frac{\partial}{\partial x^j}(\log \phi (x)).
\end{align}
\ep
See \cite{11}, for a survey. If we restrict the Funk metric on the open interval $I = \{u\in\mathbb{R}\mid -1<u<1\}$, and let $\phi (x) = 1-x^2 $, then $D=\{x \in \mathbb{R}\mid \phi (x)>0 \}$, $\partial D = \{-1 , 1\}$. Similar argument determines the Funk metric on $I$ by
 \begin{equation}\label{e20}
L_f=\frac{1}{k} (\frac{\mid y \mid}{1-u^2}+\frac{uy}{1-u^2}).
 \end{equation}
 According to (\ref{distance}) the Funk distance of any two point $a$ and $b$ in $ I$ is given by
\begin{eqnarray}
D_f(a , b)
&&= \frac{1}{k} \mid \int_a^b \frac{du}{1-u^2} \mid + \frac{1}{k} \int_a^b \frac{u du}{1-u^2}\nonumber \\
&&= \frac{1}{2k} \mid [-\ln (1-u) ]_a^b + [\ln (1+u)]_a^b \mid - \frac{1}{2k} [\ln (1-u^2) ]_a^b \nonumber\\
&&\label{e21}
 = \frac{1}{2k} (\mid \ln \frac{(1-a)(1+b)}{(1-b)(1+a)} \mid + \ln \frac{(1-a^2)}{(1-b^2)}).
\end{eqnarray}
\subsection{Intrinsic pseudo-distance }
 A geodesic $f :I \rightarrow M$ on the Finsler space $(M,F)$ is said to be \emph{projective}, if the natural parameter $u$ on $I$ is a projective parameter.
We now come to the main step for definition of the pseudo-distance $d_M$, on $(M, F)$. To do so, we proceed in analogy with the treatment of Kobayashi in Riemannian geometry, cf., \cite{8}. Although he has confirmed that the construction of intrinsic pseudo-distance is valid for any manifold with an affine connection, or more generally a projective connection, cf., \cite{12},  we restrict our consideration to  the pseudo-distances induced by the Finsler structure $F$ on a  connected manifold $M$.
Given any two points $x$ and $y$ in $(M,F)$, we consider a chain $\alpha$ of geodesic segments joining these points. That is
\begin{itemize}
\item a chain of points $x = x_0 , x_1 , ... ,x_k = y$ on $M$;
\item pairs of points $a_1,b_1 ,..., a_k,b_k$ in $I$;
\item projective maps $f_1,...,f_k$, $f_i: I \rightarrow M $, such that
$$f_i(a_i) = x_{i-1}, \quad f_i(b_i) = x_i, \quad i = 1,...,k.$$
\end{itemize}
By virtue of the Funk distance $D_f(.,.)$ on $I$ we define the length $L(\alpha)$ of the chain $\alpha$ by

$L(\alpha):= \Sigma_i D_f(a_i , b_i)$, and we put
\begin{equation}\label{e19}
d_M(x , y):= inf L(\alpha),
\end{equation}
where the infimum is taken over all chains $\alpha$ of geodesic segments from $x$ to $y$.
\setcounter{thm}{0}
\bl
Let $(M, F)$ be a Finsler space. Then for any points $x$, $y$, and $z$ in $M$,  $d_M$ satisfies
 \begin{itemize}
 \item[(i)]\  $d_M(x , y) \neq d_M(y , x)$,
\item [(ii)]$d_M(x , z) \leq d_M(x , y) + d_M(y , z)$,
\item [(iii)]If $x = y$ then $d_M(x , y) = 0$ but the inverse is not  always true.
 \end{itemize}
\el
\begin{proof}
(i)  The Funk distance, $D_f(a_i , b_i) \neq D_f(b_i , a_i)$ for $i=1,...,k$. Therefore  we  have the proof of (i). To prove (ii),  it is enough to  show that for all positive $ \epsilon > 0$, the inequality  $ d_M(x , z) \leq d_M(x , y) + d_M(y , z) + \epsilon$ holds.
 There is a chain $\alpha_1$ joining the points $x$ and $y$ through the projective maps $f_i$, for $i=1,...,k_1$ and a chain $\alpha_2$ joining $y$ and $z$ through the projective maps $g_j$, for $j=1,...,k_2$ such that
  $$d_M(x , y) \leq L(\alpha_1) \leq d_M(x , y) +\epsilon /2,$$
  $$d_M(y , z) \leq L(\alpha_2) \leq d_M(y , z) +\epsilon /2.$$
  We define the chain $\alpha$ joining $x$ and $z$ through the projective maps $h_k$, for $k=1,...,k_1 + k_2$ such that
  $$h_k = f_k, \ \ \ k=1,...,k_1,$$
   $$h_k = g_{k - k_1}, \ \ \ k=k_1+1,...,k_1 + k_2.$$
   From which we conclude
   $$d_M(x , z) \leq L(\alpha) \leq L(\alpha_1) + L(\alpha_2) \leq d_M(x , y) + d_M(y , z) + \epsilon .$$
To prove (iii), using  the fact  $D_f(x,x)=0$, whenever $x=y$, we have $d_M(x,y)=0$.
   Next we assume $x\ne y$, and let $M=\mathbb{R}^n $. $\mathbb{R}^n$ is flat and $Ric=0$ on $\mathbb{R}^n$. Therefor the projective parameter $u$ of any projective map, according to (\ref{e16}), is given by $$u=\frac{b}{t+a}+c, \qquad a,b,c\in \mathbb{R},$$
   where $t$ is an arbitrary parameter.We use the special solution $u(t)=c$. Again according to the fact that $D_f(c,c)=0$, we have $d_M(x,y)=0$. This completes the proof.
   \end{proof}
We call $d_M(x,y)$ the \emph{ pseudo-distance} of any two points $x$ and $y$ on $M$.
From the property (\ref{PropertyOfSchwarzian}) of Schwarzian derivative, and the fact that the projective parameter is invariant under fractional transformation, the pseudo-distance $d_M$ is projectively invariant.

 \bp
Let $(M,F)$ be a Finsler space.
\begin{itemize}

 \item [(a)] If the geodesic  $f : I \rightarrow M $ is projective, then
 \begin{equation}\label{e23}
D_f(a , b) \geq d_M (f(a) , f(b)), \ \ \ \ a,b \in I.
 \end{equation}
 \item [(b)] If $\delta_M$ is any pseudo-distance on $M$ with the property
  $$D_f(a , b) \geq \delta_M(f(a) , f(b)), \ \ \ \ a,b \in I,$$
   and for all projective maps $f: I \rightarrow M$, then
  \begin{equation}\label{e24}
 \delta_M(x , y)\leq d_M(x , y), \ \ \ \ \ \  x,y \in M.
  \end{equation}
  \end{itemize}
  \begin{proof}
(a) By definition $d_M$ is supposed to be the infimum of $L(\alpha)$ for all chain $\alpha$  and actually $f$ is one of them.\\
(b) Let $x,y \in M$ and consider an arbitrary  chain of projective segments $\alpha$, satisfying
  $x = x_0,...,x_k = y$,   $a_1,b_1,...,a_k,b_k \in I$, and projective maps $f_1,...,f_k$, $f_i:I \rightarrow M$, such that
  $$f_i(a_i) = x_{i-1} \ \ \ , \ \ \ f_i(b_i) = x_i.$$
   We have by assumption
  $$L(\alpha) = \Sigma D_f(a_i , b_i) \geq \Sigma \delta_M (f(a_i) , f(b_i)).$$
  So for an arbitrary chain $\alpha$, the triangle inequality property for the pseudo-distance $\delta_M$ leads to
  $$L(\alpha) \geq \delta_M (f(a_1) , f(b_k))=\delta_M(x,y).$$
   Therefor $\delta_M(x , y)$ is a lower bound for $L(\alpha)$ and $inf_{\alpha} \ \ L(\alpha) \geq \delta_M(x , y)$. Finally we have $D_f(x , y) \geq \delta_M(x , y)$. This completes the proof.
  \end{proof}
  \ep
  \subsection{Proof of the Schwarz' lemma on Finsler Spaces}
  Let $ds_I=\frac{1}{k} (\frac{\mid dx \mid}{1-x^2} + \frac{x dx}{1-x^2})$ be the first fundamental  form related to the Funk metric $L_F$ on the open interval $I$, and $ds^2_M=g_{ij}(x , dx)dx^i dx^j$ the first fundamental  form related to the Finsler metric $F$ on $M$. Denote by  $\tilde{f}$  the natural lift of a projective map ${f}$ to the tangent bundle $TM$.
   Now we prove the Theorem 1.

{ \it Proof of Theorem 1.}
    Let $f : I\rightarrow M$ be an arbitrary projective map.
By virtue of (\ref{arc length}), we have
    $$s(t) = \int_{t_0}^t \sqrt{g_{ij}(f , \frac{df}{dt}) \frac{df^i}{dt} \frac{df^j}{dt}} dt.$$
    This is equivalent to
    \begin{equation}\label{e28}
    ds = \sqrt{g_{ij}(f , \frac{df}{dt}) df^i df^j}.
    \end{equation}
     We denote the projective parameter and the arc-length parameter of $f$, by ``$u$"  and  ``$s$", respectively.
    We put
    \be\label{h}
    h = \frac{ds^2_M}{ds_I},
    \ee
    and find an upper bound for $h$ in the open interval $I$. This leads to
     $$h = \frac{\sqrt{g_{ij}(f , df)df^i df^j}}{\frac{1}{k} (\frac{\mid du \mid}{1-u^2} + \frac{u du}{1-u^2})}.$$
     By means of (\ref{e28}) $h$ reads $h = \frac{k ds}{du(\frac{u \underline{+} 1}{1 - u^2})}.$
     Thus
    $$\ln h = \ln k + \ln \frac{ds}{du} + \ln \frac{1 - u^2}{u \underline{+} 1},$$
    $$\frac{d \ln h}{du}= \frac{s^{''}}{s^{'}} - \frac{2u}{(1 - u^2)} - \frac{1}{u \underline{+} 1}.$$
    At the maximum point of $h$, $\frac{d \ln h}{du}$ vanishes, so
     \begin{equation}\label{e29}
   \frac{s^{''}}{s^{'}} = \frac{2u}{(1 - u^2)} + \frac{1}{u \underline{+} 1}.
       \end{equation}
       The second derivative yields
       $$\frac{d^2\ln h}{du^2} = \frac{s^{'''}s^{'} - (s^{''})^2}{(s^{'})^2} + \frac{-2(1 - u^2) - 4u^2}{(1 - u^2)^2} + \frac{1}{(u \underline{+} 1)^2}$$
       $$\frac{s^{'''}}{s^{'}} - (\frac{s^{''}}{s^{'}})^2 - 2\frac{1 + u^2}{(1 - u^2)^2 } + \frac{1}{(u \underline{+} 1)^2}$$
       $$= \{s , u\} + \frac{1}{2}(\frac{s^{''}}{s^{'}})^2 - 2\frac{1 + u^2}{(1- u^2)^2} + \frac{1}{(u \underline{+} 1)^2}.$$
       By virtue of  (\ref{e29}) and (\ref{schwarzian}), the parameters $p$ and $t$ satisfy
       $$\{p , t\} = - \{t , p\} (\frac{dp}{dt})^2.$$
       Thus we get
        $$\frac{d^2\ln h}{du^2} = - \{u , s\} (\frac{ds}{du})^2 - \frac{2}{(1 - u^2)^2} +  \frac{3}{2(u \underline{+} 1)^2} + \frac{2u}{(1 - u^2)(u \underline{+} 1)}. $$
         Also, at the maximum point of $h$, $\frac{d^2\ln h}{du^2} \leq 0$. Considering this fact, by multiplying  both side of the inequality in  $\frac{k^2(1 - u^2)^2}{(u \underline{+} 1)^2}$, we have
          $$- \{u , s\} h^2 -2 \frac{k^2}{(u \underline{+} 1)^2} + \frac{3}{2} \frac{k^2(1 - u^2)^2}{(u \underline{+} 1)^4} + 2 \frac{k^2 u(1 - u^2)}{(u \underline{+} 1)^3} \leq 0.$$
          To study the above statement we examine the two cases $u+1$ and $u-1$.
          \emph{Case (I):} Let us consider the term $(u + 1)$. Thus
          $$- \{u , s\} h^2 + k^2 (\frac{-2 + 3/2(1 - u)^2 + 2u(1 - u)}{(u + 1)^2}) \leq 0,$$
          which reduces to
          \begin{equation}\label{e30}
          - \{u , s\} h^2 \leq \frac{k^2}{2}.
          \end{equation}
          On the other hand, we have
          $$\{u , s\} = \frac{2}{n-1}(Ric)_{ij} \frac{dx^i}{ds} \frac{dx^j}{ds}.$$
           By means of the assumption (\ref{e25}), we get
           $$\{u , s\} \leq \frac{-2c^2}{n-1}g_{ij} \frac{dx^i}{ds} \frac{dx^j}{ds}.$$
           For the arc-lenght parameter $s$, we have $g_{ij} \frac{dx^i}{ds} \frac{dx^j}{ds} = 1$, Therefor
            \begin{equation}\label{e31}
          \{u , s\} \leq \frac{-2c^2}{n-1} \leq 0.
             \end{equation}
          Taking into account (\ref{e30}) and (\ref{e31}), $h$ satisfies the following inequality
          $$h^2 \leq \frac{-k^2}{2\{u , s\}}.$$
          Finally we have
          \be\label{h2}
          h^2 \leq \frac{k^2 (n - 1)}{4c^2}.
          \ee
          \emph{Case (II):} For  the term $(u - 1)$ by similar argument  we obtain (\ref{h2}). This completes the proof.
 \hspace{\stretch{1}}$\Box$\\
 \begin{cor}
          Let $(M,F)$ be a Finsler space for which the Ricci tensor satisfies
  \begin{equation}\label{e25}
 (Ric)_{ij} \leq -c^2g_{ij}.
  \end{equation}
  as matrices, for a positive constant $c$. Let $d_F(. , .)$ be the distance induced by $F$, then for every projective map $f:I \rightarrow M$, $d_F$ is bounded by the Funk distance $D_f$, that is
   \begin{equation}\label{e32}
D_f(a , b) \geq \frac{2 c}{\sqrt{n - 1} k} d_F(f(a) , f(b)).
 \end{equation}
 \end{cor}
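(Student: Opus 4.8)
The plan is to integrate the pointwise line-element estimate furnished by Theorem 1 along the image of a single projective map, and then to invoke the definition of the Finsler distance $d_F$ as an infimum of lengths. So this Corollary is really just Theorem 1 promoted from infinitesimal to integrated form.

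First I would record Theorem 1 in its pointwise form. The quantity $h = ds_M / ds_I$ appearing in its proof is shown at its maximum to satisfy (\ref{h2}), namely $h^2 \leq \frac{k^2(n-1)}{4c^2}$; since the value at the maximum controls $h$ throughout $I$, this yields the pointwise comparison of line elements $ds_M \leq \frac{k\sqrt{n-1}}{2c}\, ds_I$ along any projective map $f : I \to M$.

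Next, fixing $a,b \in I$, I would integrate this inequality along the segment of $I$ running from $a$ to $b$. The Finsler arc length of the image curve is $L(f|_{[a,b]}) = \int_a^b ds_M$, while the one-dimensionality of $I$ forces the segment to be the unique curve joining $a$ to $b$, so that $\int_a^b ds_I = D_f(a,b)$ directly from the definition (\ref{distance}) of the Funk distance, consistent with the explicit computation (\ref{e21}). Combining these with the pointwise bound gives $L(f|_{[a,b]}) \leq \frac{k\sqrt{n-1}}{2c}\, D_f(a,b)$. Finally I would apply the definition (\ref{distance}) of $d_F$ as the infimum of Finsler lengths over all admissible curves: since $f|_{[a,b]}$ is one such curve joining $f(a)$ to $f(b)$, we have $d_F(f(a),f(b)) \leq L(f|_{[a,b]})$, and chaining the two estimates produces $d_F(f(a),f(b)) \leq \frac{k\sqrt{n-1}}{2c}\, D_f(a,b)$, which is exactly the claimed inequality after rearrangement.

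The argument is essentially bookkeeping, so the main difficulty is conceptual rather than computational. The two points deserving attention are, first, confirming that the maximum-point estimate of Theorem 1 genuinely controls $h$ on all of $I$ so that the line-element comparison holds at every point and not merely at the extremum; and second, respecting the \emph{asymmetry} of the Funk distance, that is, orienting the segment so that $\int_a^b ds_I$ equals $D_f(a,b)$ rather than $D_f(b,a)$. I expect no genuine obstacle beyond these two caveats.
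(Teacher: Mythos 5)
Your proposal is correct and follows essentially the same route as the paper: the paper's proof simply takes the square root of the Theorem 1 estimate to get $\frac{2c}{\sqrt{n-1}\,k}\,ds \leq ds_I$ and integrates. Your additional bookkeeping --- identifying $\int_a^b ds_I$ with $D_f(a,b)$ and invoking the infimum definition of $d_F$ so that the Finsler length of $f|_{[a,b]}$ dominates $d_F(f(a),f(b))$ --- merely makes explicit what the paper leaves implicit.
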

\begin{proof}
By means of Theorem 1 we have
 $$(ds)^2 \leq \frac{k^2 (n - 1)}{4c^2}ds_I^2,$$
 that is $\frac{2 c}{\sqrt{n - 1} k} ds \leq ds_I.$
 By integration we obtain(\ref{e32}).
\end{proof}

Now we are in a position to prove Theorem 2.

 \textit{Proof of Theorem 2.}
To establish the proof we have only to show that if  $d_M(x , y) = 0$ then $x = y$.
  By Proposition 2 and the above corollary we get
  $$ d_F(x , y) \frac{2 c}{\sqrt{n - 1} k} \leq d_M(x , y).$$
If  $d_M(x , y) = 0$ then $d_F(x , y) = 0$ and $x = y$. Thus the pseudo-distance $d_M$ is a distance. This completes the proof.
 \hspace{\stretch{1}}$\Box$\\


\begin{thebibliography}{00}
 
 \bibitem{AIM} Antonelli, P.L; Ingarden, R.S; Matsumoto, M., The theory of sprays and Finsler spaces with application in Physics and Biology, Kluwer Academic Pub.Volume 58, 1993.
 
 \bibitem{AM} Antonelli, P.L.,  Miron, R.  Lagrange and Finsler Geometry:
Applications to Physics and Biology, Springer, Dec 31, 1995.
 
\bibitem{9}  Bao, D.; Chern, S.; Shen, Z., Riemann-Finsler geometry, Springer-Verlag, 2000.
    
\bibitem{5} Berwald, L., On the projective geometry of paths, Proc. Edinburgh Math. Soc.5  (1937) 103 - 115.


\bibitem{BH} Bidabad, B., Hedayatian, S., Capacity and a classifcation of Finsler spaces, Iran. J. Sci. Technology, Transaction A, vol. 3, (2012)267--275.
    
\bibitem{4} Chen, X.; Shen, Z., A comparison theorem on the Ricci curvature in projective geometry. Ann. Global Anal. Geom. 23 (2003), no. 2, 141--155.
    
\bibitem{6} Eisenhart, L.P., Non-Riemannian geometry, American Mathematical Society Colloquium Publications, Volume 8, 1927.
\bibitem{12} Kobayashi, S., Intrinsic distances associated with flat afine or projective structure, J. Fac. Sci. Univ. of Tokyo, IA, 24 (1977) 129-135.

\bibitem{8} Kobayashi, S., Projective structure of hyperbolic type, Minimal Submanifolds and geodesics, Kaigai publications, Tokyo (1978) 85--92.

\bibitem{1} Kobayashi, S., Projective invariant metrics for Einstein spaces, Nagoya Math. J. Vol.73  (1979) 171-174.









\bibitem{11} Okada, T., On models of projectively flat Finsler Spaces of constant negative curvature," Tensor, N. S. Vol 40 (1983) 117-124.

\bibitem{3} Shen, Z., Projectively related Einstein metrics in Riemann-Finsler geometry, Math. Ann. 320
(2001) 625--647.
\bibitem{7} Thomas, T.Y., On the projective and equi-projective geometries of paths, Proc. N. A . S. 11 (1925).

\end{thebibliography}
  \end{document}